\documentclass[12pt]{amsart}
\usepackage{amssymb, a4wide, mathdots, url,hyperref}
\usepackage[usenames]{color}

\newcommand{\abs}[1]{\left|{#1}\right|}

\newcommand{\wt}{\widetilde}
\newcommand{\GL}{\operatorname{GL}}

\newcommand{\Ad}{\operatorname{Ad}}

\newcommand{\Nu}{\mathcal{V}}

\newcommand{\sprod}[2]{\left\langle{#1},{#2}\right\rangle}

\renewcommand{\Re}{\operatorname{Re}}

\newcommand{\N}{\mathbb{N}}

\newcommand{\bs}{\backslash}

\newcommand{\diag}{\operatorname{diag}}

\newcommand{\Hom}{\operatorname{Hom}}

\newcommand{\K}{\mathcal{K}}
\newcommand{\Wh}{\mathcal{W}}

\newcommand{\gk}{{\operatorname{GK}}}

\newtheorem{theorem}{Theorem}[section]

\theoremstyle{remark}
\newtheorem{remark}[theorem]{Remark}

\title{A note on gamma factors for pairs}

\author{Omer Offen}
\address{Department of Mathematics, Brandeis University, Waltham MA 02453}
\email{offen@brandeis.edu}
\date{\today}
\keywords{}

\begin{document}\maketitle
\begin{abstract}
In this short note we observe that the gamma factor defined by Gelfand and Kazhdan coincides with the Rankin-Selberg root number defined by Jacquet, Piatetskii-Shapiro and Shalika. 
\end{abstract}
\section{introduction}
In their seminal work \cite{MR701565} Jacquet, Piatetskii-Shapiro and Shalika defined the local invariants, $L$ and $\epsilon$ factors, for pairs of representations of general linear groups of arbitrary ranks over a non-archimedean local field. Earlier, Gelfand and Kazhdan introduced in \cite{MR0333080} a local gamma factor for a pair of representations $\pi$ cuspidal on $\GL(n)$ and $\tau$ generic on $\GL(n-1)$. Their construction is more direct and conceptual.  

To the best of my knowledge the connection between the two constructions was never documented, and I hope that doing so in these notes will be of service to the community. Recently, David Kazhdan asked me about the relation between the two constructions for $n=2$ and this led me to write this short note. I thank him for pointing the question to me and for his encouragement to post this note.  

\begin{remark}
For all facts mentioned bellow about representations of $\GL(n)$ and their Whittaker models we refer to \cite{MR0333080} and the references therein.
\end{remark}

\section{Notation}
Let $F$ be a non-archimedean local field and $\psi$ a non trivial additive character of $F$.
For $n\in \N$ let $G=G_n=\GL_n(F)$, $U=U_n$ its subgroup of upper-triangular unipotent matrices and $\theta=\theta_n$ the generic character of $U$ defined by
\[
\theta(u)=\psi(u_{1,2}+\cdots +u_{n-1,n}).
\]
Let $P$ be the mirabolic subgroup of matrices in $G$ with last raw equal to $(0,\dots,0,1)$. We write $P=V\rtimes G_{n-1}$ where $V\simeq F^{n-1}$ is the unipotent radical of $P$. We view $G_{n-1}$ as a subgroup of $P$ via $g\mapsto \diag(g,1)$.

Write $i_U^G(\theta)$ for smooth induction of compact support and $I_U^G(\theta)$ for smooth induction from $(U,\theta)$ to $G$. We similarly define $i_U^P(\theta)$ and $I_U^P(\theta)$. Thus for $Q\in \{P,G\}$ we have that $I_U^Q(\theta)$ is the contragredient (smooth dual) of $i_U^Q(\bar\theta)$ with pairing
\[
\sprod{f}{\varphi}=\int_{U\bs Q} f(q)\varphi(q)\ dq, \ \ \ f\in i_U^Q(\bar\theta),\ \varphi\in I_U^Q(\theta).
\]

The restriction to $P$ map $W\mapsto W|_{P}$ defines surjective $P$-equivariant morphisms
\[
i_U^G(\theta)\rightarrow i_U^P(\theta)\rightarrow 0\ \ \ \text{and}\ \ \ I_U^G(\theta)\rightarrow I_U^P(\theta)\rightarrow 0.
\]
We further point out that restriction to $G_{n-1}$ gives $G_{n-1}$-equivariant isomorphisms 
\[
I_U^P(\theta)\simeq I_{U_{n-1}}^{G_{n-1}}(\theta_{n-1})\ \ \ \text{and}\ \ \ i_U^P(\theta)\simeq i_{U_{n-1}}^{G_{n-1}}(\theta_{n-1}).
\]
The inverse map $f\mapsto \phi$ is given by
\[
\phi(vg)=\theta(v)f(g),\ \ \ v\in V,\ g\in G_{n-1}.
\]

By a representation, we always mean a smooth representation. We say that an irreducible representation $(\pi,\Nu)$ of $G$ is generic if $\Hom_U(\pi,\theta)\ne 0$. In that case, this space is one dimensional and fixing a non-zero $l\in \Hom_U(\pi,\theta)$ defines a map $\xi\mapsto W_\xi^{\pi,l}: \Nu\rightarrow I_U^G(\theta)$ by
\[
W_\xi^{\pi,l}(g)=l(\pi(g)\xi),\ \ \ g\in G.
\]
This map defines the unique up to scalar $G$-isomorphism of $\pi$ with a subspace of $I_U^G(\theta)$ and its image, the 
Whittaker model of $\pi$, is denoted by $\Wh(\pi,\theta)$. The Kirillov model of $\pi$, denoted $\K(\pi,\theta)$ is the image of the restriction to $P$ map $\Wh(\pi,\theta)\rightarrow \K(\pi,\theta)$. The restriction to $P$ map is a $P$-isomorphism $\Wh(\pi,\theta)\simeq \K(\pi,\theta)$.

If in addition $\pi$ is cuspidal then $\K(\pi,\theta)=i_U^P(\theta)$.

Let $\iota$ be the involution on $G$ defined by transpose inverse: $g^\iota={}^tg^{-1}$. For a representation $(\pi,\Nu)$ of $G$ let $(\pi^\iota,\Nu)$ be the representation $\pi\circ \iota$ of $G$ on $\Nu$. If $\pi$ is irreducible then $\pi^\iota\simeq \tilde\pi$ is the contragredient of $\pi$.
Let 
\[
w_n=(\delta_{i,n+1-j})\in G
\]
and for $W\in I_U^G(\theta)$ let $\wt W\in I_U^G(\bar\theta)$ be defined by $\wt W(g)=W(w_n g^\iota)$. For an irreducible, generic representation $\pi$ of $G$ we have that $W\mapsto \wt W$ maps $\Wh(\pi,\theta)$ to $\Wh(\pi^\iota,\bar\theta)$.

Finally, set 
\[
\epsilon_n=\diag((-1)^{n-1},\dots,-1,1)\in G
\] 
and for $W\in \Wh(\pi,\theta)$ let $W^{\epsilon}\in \Wh(\pi,\bar\theta)$ be defined by $W^\epsilon(g)=W(\epsilon_ng)$. The map $W\mapsto W^{\epsilon}:\Wh(\pi,\theta)\rightarrow\Wh(\pi,\bar\theta)$ is a $G$-isomorphism.
In particular, for $W\in \Wh(\pi,\theta)$ we have that $(\wt W)^\epsilon\in \Wh(\pi^\iota,\theta)=\Wh(\tilde\pi,\theta)$.

\section{The Jacquet, Piateskii-Shapiro and Shalika construction}
Let $\pi$ resp. $\tau$ be an irreducible generic representation of $G$ resp. $G_{n-1}$. 
The associated family of zeta integrals is defined in \cite{MR701565} by the convergent integral
\[
Z(s,W,W')=\int_{U_{n-1}\bs G_{n-1}}W(g)W'(g)\abs{\det g}^{s-\frac12}\ dg,\ \ \ W\in \Wh(\pi,\theta), W'\in \Wh(\tau,\bar\theta)
\]
for $\Re(s)\gg 1$ and by meromorphic continuation to a rational function of $q^{-s}$ for general $s$.
The gamma factor $\gamma(s,\pi,\tau,\psi)$ associated to the pair $\pi,\tau$ is characterized by the functional equation
\begin{equation}\label{eq JPSS}
Z(1-s,\wt W,\wt W')=\omega_\tau(-1)^{n-1}\gamma(s,\pi,\tau,\psi)Z(s,W,W')
\end{equation}
where $\omega_\tau$ is the central character of $\tau$.

If $\pi$ is cuspidal then the zeta integrals converge for every $s$ and can by made non-zero for some data $W, W'$ as above.

\section{The Gelfand-Kazhdan construction}
Let $s_n=((-1)^{i-1}\delta_{i,n+1-j})\in G$.
Since $s_n^2=(-1)^{n-1}I_n$ is central in $G$, the conjugation $\Ad(s_n)$ by $s_n$ is an involution on $G$. Furthermore, $s_n^\iota=s_n$ and consequently 
\[
\jmath:=\Ad(s_n)\circ \iota=\iota\circ \Ad(s_n)
\] 
is an involution on $G$. Note that $\jmath$ preserves $U$ and stabilizes $\theta$. Consequently, given a representation $(\pi,\Nu)$ of $G$ and $l\in \Hom_U(\pi,\theta)$ the linear form $\hat l=l\circ \pi(s_n)$ on $\Nu$ satisfies $\hat l\in \Hom_U(\pi^\iota,\theta)$.

Assume from now on that $\pi$ is irreducible and cuspidal. Then the map $\varphi_{\pi,l}:\Nu \rightarrow i_U^P(\theta)$ defined by $\varphi_{\pi,l}(\xi)=W_\xi^{\pi,\ell}|_{P}$ is a $P$-isomorphism. Furthermore, 
\[
K(\pi)=\varphi_{\pi^\iota,\hat l}\circ \varphi_{\pi,l}^{-1}:i_U^P(\theta)\rightarrow i_U^P(\theta)
\] 
is an isomorphism independent of $l$. In fact, $K(\pi)\in \Hom_{G_{n-1}}(i_U^P(\theta),i_U^P(\theta)^\iota)$.

We further define the operator $A$ on $i_U^P(\theta)$ by 
\[
(Af)(vg)=f(vs_{n-1}g^\iota),\ \ \ f\in i_U^P(\theta), v\in V,g\in G_{n-1}.
\]
Note that also $A\in \Hom_{G_{n-1}}(i_U^P(\theta),i_U^P(\theta)^\iota)=\Hom_{G_{n-1}}(i_U^P(\theta)^\iota,i_U^P(\theta))$ and therefore that $C(\pi)=A\circ K(\pi)\in\Hom_{G_{n-1}}(i_U^P(\theta),i_U^P(\theta))$.
We denote by $C^*(\pi)$ the adjoint operator on $I_U^P(\bar\theta)$ so that
\[
\sprod{C(\pi)f}{\varphi}=\sprod{f}{C^*(\pi)\varphi},\ \ \ f\in i_U^P(\theta),\ \varphi\in I_U^P(\bar \theta).
\]

Let $\tau$ be an irreducible, generic representation of $G_{n-1}$ and identify its Whittaker model $\Wh(\tau,\bar\theta)\subseteq I_{U_{n-1}}^{G_{n-1}}(\bar\theta_{n-1})\simeq I_U^P(\bar\theta)$ as a subspace of $I_U^P(\bar\theta)$.
Then $C^*(\pi)$ restricts to a $G_{n-1}$-equivariant operator on $\Wh(\tau,\bar\theta)$ and consequently is a scalar operator. Gelfand and Kazhdan defined the gamma factor associated to $\pi$ and $\tau$ to be this scalar and we denote it by $\gamma_{\gk}(\pi,\tau,\psi)$.
Explicitly, $\sprod{f}{C^*(\pi)W'}=\gamma_{\gk}(\pi,\tau,\psi)\sprod{f}{W'}$ or as we prefer to write it
\begin{equation}\label{eq gkfe}
\sprod{K(\pi)f}{A^*W'}=\gamma_{\gk}(\pi,\tau,\psi)\sprod{f}{W'},\ \ \ f\in i_U^P(\theta),\ W'\in \Wh(\tau,\bar\theta).
\end{equation}
\section{The comparison}
The main result of this note compares between the two constructions.
\begin{theorem}
Let $\pi$ be an irreducible cuspidal representation of $\GL_n(F)$ and $\tau$ an irreducible, generic representation of $\GL_{n-1}(F)$. Then 
\[
\gamma_{\gk}(\pi,\tau,\psi) = \gamma(\frac12,\pi,\tau,\psi).
\]
\end{theorem}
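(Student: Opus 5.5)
The idea is to evaluate both sides of the Gelfand–Kazhdan functional equation \eqref{eq gkfe} on Kirillov functions of $\pi$, and to recognize them as the two sides of the Jacquet–Piatetskii-Shapiro–Shalika functional equation \eqref{eq JPSS} at $s=\frac12$. Since $\pi$ is cuspidal, $\K(\pi,\theta)=i_U^P(\theta)$. So every $f\in i_U^P(\theta)$ has the form $f=W|_P$ with $W=W_\xi^{\pi,l}\in\Wh(\pi,\theta)$. Moreover the zeta integrals converge for all $s$, so $s=\frac12$ can be plugged in directly and no meromorphic continuation is needed.

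\emph{Right-hand side of \eqref{eq gkfe}.} Using the identification $I_U^P(\bar\theta)\simeq I_{U_{n-1}}^{G_{n-1}}(\bar\theta_{n-1})$ and the factorization $U\bs P\simeq U_{n-1}\bs G_{n-1}$, I expect
\[
\sprod{f}{W'}=\int_{U_{n-1}\bs G_{n-1}}W(g)W'(g)\,dg=Z(\tfrac12,W,W').
\]

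\emph{Left-hand side of \eqref{eq gkfe}.} By definition $K(\pi)f=W_\xi^{\pi^\iota,\hat l}|_P$, which is the function $p\mapsto l(\pi(s_n p^\iota)\xi)=W(s_np^\iota)$. Hence
\[
(AK(\pi)f)(g)=W\bigl(s_n\,\diag(s_{n-1}g^\iota,1)^\iota\bigr)=W\bigl(s_n\diag(s_{n-1},1)\diag(g,1)\bigr),
\]
using $s_{n-1}^\iota=s_{n-1}$. Therefore
\[
\sprod{K(\pi)f}{A^*W'}=\sprod{AK(\pi)f}{W'}=\int_{U_{n-1}\bs G_{n-1}}W\bigl(s_n\diag(s_{n-1},1)\diag(g,1)\bigr)W'(g)\,dg.
\]

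\emph{Dual zeta integral.} On the JPSS side, $\wt W(\diag(g,1))=W(w_n\diag(g^\iota,1))$ and $\wt W'(g)=W'(w_{n-1}g^\iota)$. I will substitute $g\mapsto w_{n-1}g^\iota$ in $U_{n-1}\bs G_{n-1}$. This is legitimate because $\Ad(w_{n-1})\circ\iota$ preserves $U_{n-1}$ and the Haar measure, and $\abs{\det}^{0}=1$ at $s=\frac12$. The substitution gives
\[
Z(\tfrac12,\wt W,\wt W')=\int_{U_{n-1}\bs G_{n-1}}W\bigl(w_n\diag(w_{n-1},1)\diag(g,1)\bigr)W'(g)\,dg .
\]

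\emph{Sign bookkeeping (the main obstacle).} Both $s_n\diag(s_{n-1},1)$ and $w_n\diag(w_{n-1},1)$ are the same cyclic permutation matrix up to signs: $s_n\diag(s_{n-1},1)=d\,w_n\diag(w_{n-1},1)\,d'$ with $d,d'$ diagonal sign matrices of the type $\epsilon_n$, $\pm I$. I plan to move $d$ to the left through $W$. Conjugating $U$ by $d$ changes $\theta$ to $\theta$ or $\bar\theta$. Since $W(dg)$ must also transform under $\theta$ (the left side comes from $\hat l\in\Hom_U(\pi^\iota,\theta)$), $d$ should lie in the center times $T\cap\ker$ of the relevant characters, contributing at most a central sign $\omega_\pi(\pm1)$. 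I plan to move $d'=\diag(d'_0,1)$ to the right into $g$, and then change variables $g\mapsto d_0'g$. The factor $W'(d_0'^{-1}g)$ transforms by $\bar\theta_{n-1}$ only if $d_0'$ is $\pm$ central in $G_{n-1}$ modulo sign patterns fixing $\theta$, which should yield exactly $\omega_\tau(-1)^{n-1}$. I will verify this entrywise for the explicit matrices $s_k=((-1)^{i-1}\delta_{i,k+1-j})$, checking small $n$ first. The expected outcome is
\[
\sprod{K(\pi)f}{A^*W'}=\omega_\tau(-1)^{n-1}Z(\tfrac12,\wt W,\wt W').
\]

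\emph{Conclusion.} Combining this with the right-hand side computation and \eqref{eq JPSS} at $s=\frac12$ gives
\[
\gamma_{\gk}(\pi,\tau,\psi)\,Z(\tfrac12,W,W')=\gamma(\tfrac12,\pi,\tau,\psi)\,Z(\tfrac12,W,W').
\]
The theorem then follows once I choose $W,W'$ with $Z(\frac12,W,W')\neq0$. Such data exist because $f$ ranges over all of $i_U^P(\theta)$: taking $f$ supported near the identity makes $\sprod{f}{W'}\ne0$.
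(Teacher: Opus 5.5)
Your route is the paper's. You evaluate \eqref{eq gkfe} on $f=W|_P$, recognize $\sprod{f}{W'}=Z(\frac12,W,W')$, and aim to identify the left side with $\omega_\tau(-1)^{n-1}Z(\frac12,\wt W,\wt W')$. Your formula for $AK(\pi)f$ and your rewriting of $Z(\frac12,\wt W,\wt W')$ via $g\mapsto w_{n-1}g^\iota$ are both correct. The gap is that you never carry out the step you call the main obstacle, and that step is essentially the whole content of the theorem. Your heuristic does not settle it:
\begin{itemize}
\item The $\theta$-equivariance argument can at best show that $d_0'$ is a scalar $\pm I_{n-1}$. It cannot tell you which sign, and that sign is exactly the difference between getting $\omega_\tau(-1)^{n-1}$ and getting $1$.
\item You allow a factor $\omega_\pi(\pm1)$ coming from $d$. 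If such a factor occurred, the claimed identity would fail whenever $\omega_\pi(-1)=-1$.
\end{itemize}
So your ``expected outcome'' is an assertion, not a derivation.

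The missing computation is short. Use $s_k=w_k\epsilon_k$ for every $k$, and write $\epsilon_n=\diag(\epsilon_n',1)$ with $\epsilon_n'=\diag((-1)^{n-1},\dots,-1)$. Conjugation by $w_{n-1}$ reverses the diagonal, so $w_{n-1}^{-1}\epsilon_n'w_{n-1}=\diag(-1,1,\dots,(-1)^{n-1})$. Multiplying entrywise by $\epsilon_{n-1}$ gives $(-1)^{n-1}I_{n-1}$. Hence $\epsilon_n'w_{n-1}\epsilon_{n-1}=(-1)^{n-1}w_{n-1}$, and therefore
\[
s_n\diag(s_{n-1},1)=w_n\diag(\epsilon_n'w_{n-1}\epsilon_{n-1},1)=w_n\diag(w_{n-1},1)\diag((-1)^{n-1}I_{n-1},1).
\]
In your notation this means $d=1$, so no $\omega_\pi$ factor appears, and $d_0'=(-1)^{n-1}I_{n-1}$ is central in $G_{n-1}$. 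The substitution $g\mapsto(-1)^{n-1}g$ is well defined and measure preserving on $U_{n-1}\bs G_{n-1}$. It moves the scalar onto $W'$ and produces exactly $\omega_\tau(-1)^{n-1}$.

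With this inserted, your proof is complete. It differs from the paper's only in bookkeeping: the paper computes the adjoint, $A^*W'(g)=\omega_\tau(-1)^n\wt W'(\epsilon_{n-1}g)$, writes $K(\pi)f=(\wt W)^\epsilon|_P$, and concludes using $\epsilon_n\epsilon_{n-1}=-I_{n-1}$.
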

\begin{proof}
Note that for $\xi\in\Nu$, the space of $\pi$,  we have
\[
K(\pi)(W_\xi^{\pi,l}|_{P})=W_\xi^{\pi^\iota,\hat l}|_{P}.
\]
Since $s_n=w_n\epsilon_n$ and $\epsilon_n^\iota=\epsilon_n$, for $W=W_\xi^{\pi,l}\in \Wh(\pi,\Theta)$ we have 
\[
W_\xi^{\pi^\iota,\hat l}(g)=W(s_ng^\iota)=(\wt W)^{\epsilon}( g).
\]
Next we explicate $A^*W'(g)$ for $g\in G_{n-1}$. For $f\in i_U^P(\theta)$ and $\varphi\in I_U^P(\bar\theta)$ we have that
\[
\sprod{f}{A^*\varphi}=\sprod{Af}{\varphi}=\int_{U_{n-1}\bs G_{n-1}}f(s_{n-1}g^\iota)\varphi(g)\ dg=\int_{U_{n-1}\bs G_{n-1}}f(g)\varphi(s_{n-1}^{-1} g^\iota)\ dg
\]
and consequently 
\[
(A^*W')(g)=W'(s_{n-1}^{-1}g^\iota),\ \ \ g\in G_{n-1}.
\]
Note that $s_{n-1}^{-1}=(-1)^n w_{n-1}\epsilon_{n-1}$ so that for $f=W|_{P}$ we have
\[
\sprod{K(\pi)f}{A^*W'}=\omega_\tau(-1)^n\int_{U_{n-1}\bs G_{n-1}}\wt W(\epsilon_n g)\wt W'(\epsilon_{n-1}g)\ dg.
\]
Note that $\epsilon_n\in G_{n-1}$ so that the variable change $g\mapsto \epsilon_n g$ makes sense on $U_{n-1}\bs G_{n-1}$. Furthermore, $\epsilon_n\epsilon_{n-1}=-I_{n-1}$ and we conclude that 
\[
\sprod{K(\pi)f}{A^*W'}=\omega_\tau(-1)^{n-1}\int_{U_{n-1}\bs G_{n-1}}\wt W(g)\wt W'(g)\ dg.
\]
To conclude, \eqref{eq gkfe} now reads
\[
Z(\frac12,\wt W, \wt W')=\omega_\tau(-1)^{n-1}\gamma_{\gk}(\pi,\tau,\psi) Z(\frac12,W,W').
\]
Comparing with \eqref{eq JPSS} and in light of the paragraph below it, the theorem follows.

\end{proof}

\def\cprime{$'$} \def\Dbar{\leavevmode\lower.6ex\hbox to 0pt{\hskip-.23ex
  \accent"16\hss}D} \def\cftil#1{\ifmmode\setbox7\hbox{$\accent"5E#1$}\else
  \setbox7\hbox{\accent"5E#1}\penalty 10000\relax\fi\raise 1\ht7
  \hbox{\lower1.15ex\hbox to 1\wd7{\hss\accent"7E\hss}}\penalty 10000
  \hskip-1\wd7\penalty 10000\box7}
  \def\polhk#1{\setbox0=\hbox{#1}{\ooalign{\hidewidth
  \lower1.5ex\hbox{`}\hidewidth\crcr\unhbox0}}} \def\dbar{\leavevmode\hbox to
  0pt{\hskip.2ex \accent"16\hss}d}
  \def\cfac#1{\ifmmode\setbox7\hbox{$\accent"5E#1$}\else
  \setbox7\hbox{\accent"5E#1}\penalty 10000\relax\fi\raise 1\ht7
  \hbox{\lower1.15ex\hbox to 1\wd7{\hss\accent"13\hss}}\penalty 10000
  \hskip-1\wd7\penalty 10000\box7}
  \def\ocirc#1{\ifmmode\setbox0=\hbox{$#1$}\dimen0=\ht0 \advance\dimen0
  by1pt\rlap{\hbox to\wd0{\hss\raise\dimen0
  \hbox{\hskip.2em$\scriptscriptstyle\circ$}\hss}}#1\else {\accent"17 #1}\fi}
  \def\bud{$''$} \def\cfudot#1{\ifmmode\setbox7\hbox{$\accent"5E#1$}\else
  \setbox7\hbox{\accent"5E#1}\penalty 10000\relax\fi\raise 1\ht7
  \hbox{\raise.1ex\hbox to 1\wd7{\hss.\hss}}\penalty 10000 \hskip-1\wd7\penalty
  10000\box7} \def\lfhook#1{\setbox0=\hbox{#1}{\ooalign{\hidewidth
  \lower1.5ex\hbox{'}\hidewidth\crcr\unhbox0}}}
\providecommand{\bysame}{\leavevmode\hbox to3em{\hrulefill}\thinspace}
\providecommand{\MR}{\relax\ifhmode\unskip\space\fi MR }
\providecommand{\MRhref}[2]{%
  \href{http://www.ams.org/mathscinet-getitem?mr=#1}{#2}
}
\providecommand{\href}[2]{#2}


\end{document}